\newtheorem{thm}{Theorem}
\newtheorem*{un-thm}{Theorem}
\newtheorem*{con}{Conjeture}
\newtheorem{cor}{Corollary}
\theoremstyle{abstract}
\theoremstyle{remark}
\theoremstyle{definition}
\newcommand{\re}{\text{\rm{Re}}}
\makeatletter \@namedef{subjclassname@2010}{
  \textup{2010} Mathematics Subject Classification}
\begin{document}

\title[The Conjecture of Fong-Tsui]{A Contribution to the Fong-Tsui Conjecture Related to Self-adjoint Operators}
\author[M. H. MORTAD]{MOHAMMED HICHEM MORTAD}

\dedicatory{}
\date{}
\thanks{}
\keywords{Bounded operator. Self-adjoint, positive and normal
operators. Absolute value and real part of an operator.}

\subjclass[2010]{47A05.}

\address{
D\'{e}partement de Math\'{e}matiques, Universit\'{e} d'Oran
(Es-Senia), B.P. 1524, El Menouar, Oran 31000, Algeria.\newline {\bf
Mailing address}: \newline Dr Mohammed Hichem Mortad \newline BP
7085 Es-Seddikia\newline Oran \newline 31013 \newline Algeria}

\email{mhmortad@gmail.com, mortad@univ-oran.dz.}

\begin{abstract}
We are interested in an open question raised by Fong-Tsui (dating
back to the beginning of the eighties of last century) as to whether
a bounded operator whose absolute value is less than the absolute
value of its real part is self-adjoint. The analogue in the
unbounded operators setting is also treated.
\end{abstract}

\maketitle

\section{Notations and Terminology}

If $T$ is a linear operator on a complex Hilbert space, then $\re T$
denotes the real part of $T$, that is, the operator
$\frac{T+T^*}{2}$, where $T^*$ is the adjoint of $T$.

The absolute value of $T$, denoted by $|T|$, is the positive square
root of the positive operator $T^*T$. Normal, self-adjoint, positive
operators and isometries are defined in their usual fashion.

If $T$ is an operator such that $T+T^*$ commutes with $T^*T$, then
we say that $T$ belongs to the $\Theta$-class (a class of operators
introduced by S.L. Campbell, see \cite{Campbell 1975 T+t* comm
T*T}).

For basic results on operator theory, the reader may consult
\cite{Con} or \cite{RUD}.

\section{Essential Background}
In the present section we recall the main results which will be
needed to prove our theorems.

\begin{thm}[\cite{Fong-Istratescu 1979}]\label{Fong-Istra}
If $T$ is a bounded operator verifying $|T|^2\leq (\re T)^2$, then
$T$ is automatically self-adjoint.
\end{thm}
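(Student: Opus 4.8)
\section*{Proof proposal}

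The plan is to decompose $T$ into its real and imaginary parts and reduce the claim to a single scalar differential inequality. Write $A=\re T$ and $B=\frac{T-T^*}{2i}$, so that $A,B$ are bounded self-adjoint operators with $T=A+iB$. Expanding $T^*T=(A-iB)(A+iB)=A^2+B^2+i(AB-BA)$ while $(\re T)^2=A^2$, the hypothesis $|T|^2\le(\re T)^2$ is seen to be equivalent to
\[
B^2+i[A,B]\le 0,\qquad\text{i.e.}\qquad i[A,B]\le -B^2\le 0 .
\]
It therefore suffices to prove that this forces $B=0$.

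The main idea is to conjugate $B$ by the unitary group generated by $A$. For $t\in\R$ set $g(t)=e^{itA}Be^{-itA}$; since $A$ is bounded this is a norm-differentiable family of self-adjoint operators with $g(0)=B$ and $\|g(t)\|=\|B\|$ for all $t$. Differentiating and using $e^{-itA}=(e^{itA})^*$ gives $g'(t)=e^{itA}\,i[A,B]\,e^{-itA}$, and feeding in the inequality $i[A,B]\le -B^2$, which is preserved under conjugation by the unitary $e^{itA}$, yields the Riccati-type operator inequality
\[
g'(t)\le -\bigl(e^{itA}Be^{-itA}\bigr)^2=-g(t)^2 .
\]

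Now fix a unit vector $x$ and let $\phi(t)=\langle g(t)x,x\rangle$. Then $\phi$ is a differentiable real function with $|\phi(t)|\le\|B\|$, and by the Cauchy--Schwarz inequality
\[
\phi'(t)=\langle g'(t)x,x\rangle\le-\langle g(t)^2x,x\rangle=-\|g(t)x\|^2\le-\phi(t)^2 .
\]
A short elementary argument shows that a bounded real function satisfying $\phi'\le-\phi^2$ must vanish identically: $\phi$ is nonincreasing (as $\phi'\le 0$) and bounded, hence has finite limits $M$ at $-\infty$ and $L$ at $+\infty$; if $\phi(t_0)\neq 0$ for some $t_0$, then $\phi$ keeps a constant nonzero sign on a half-line emanating from $t_0$, on which $(1/\phi)'=-\phi'/\phi^2\ge 1$, forcing $1/\phi$ to reach $0$ in finite time and $\phi$ to become unbounded --- a contradiction. (Equivalently, integrating $\phi^2\le-\phi'$ gives $\int_{\R}\phi^2\le M-L<\infty$, which is incompatible with $\phi$ being bounded away from $0$ on a half-line.)

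Hence $\langle g(t)x,x\rangle=0$ for every unit vector $x$, so each self-adjoint operator $g(t)$ is zero; in particular $B=g(0)=0$, and $T=\re T$ is self-adjoint. The genuinely nontrivial step is the passage to the unitary flow $e^{itA}$ together with the recognition of the differential inequality $g'\le-g^2$; the reduction at the start and the scalar lemma at the end are routine.
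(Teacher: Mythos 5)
The paper never proves this statement---it is quoted from Fong--Istr\u{a}\c{t}escu as background (Theorem \ref{Fong-Istra})---so there is no in-paper argument to compare yours with; judged on its own, your proof is correct and complete. The reduction is right: with $A=\re T$ and $B=\frac{T-T^*}{2i}$ one gets $T^*T=A^2+B^2+i[A,B]$, so the hypothesis is exactly $i[A,B]\le -B^2$. Since $A$ is bounded, $t\mapsto e^{itA}$ is norm-analytic, so $g(t)=e^{itA}Be^{-itA}$ is norm-differentiable with $g'(t)=e^{itA}\,i[A,B]\,e^{-itA}$, and because operator inequalities are preserved under congruence by $e^{-itA}=(e^{itA})^*$ you correctly obtain $g'(t)\le -g(t)^2$. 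The passage to the scalar function is also sound: Cauchy--Schwarz gives $\phi(t)^2=\langle g(t)x,x\rangle^2\le\|g(t)x\|^2$, hence $\phi'\le-\phi^2$, and your blow-up argument for $1/\phi$ on a half-line is valid precisely because $\phi$ is defined and uniformly bounded (by $\|B\|$) on \emph{all} of $\R$, a point you do use. Finally, $\langle Bx,x\rangle=0$ for every $x$ forces $B=0$ since $B$ is self-adjoint, so $T=A$. The one cosmetic slip is the phrase ``by the Cauchy--Schwarz inequality'' attached to the line $\phi'(t)\le-\|g(t)x\|^2$: Cauchy--Schwarz is actually used in the \emph{next} step, to pass from $-\|g(t)x\|^2$ to $-\phi(t)^2$; the displayed chain as written ends at $-\phi(t)^2$ and silently absorbs that step. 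This unitary-flow/Riccati argument is a genuinely self-contained route to the cited theorem and is, if anything, more transparent than simply invoking the reference, since it isolates exactly where boundedness of $A$ and $B$ is needed.
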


\begin{thm}[\cite{Fong-Tsui 1981}]\label{Fong Tsui}
If $T$ is a bounded operator satisfying $|T|\leq \re T$, then $T$ is
positive.
\end{thm}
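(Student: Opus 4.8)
The natural plan is to deduce the statement from Theorem~\ref{Fong-Istra}: if the hypothesis $|T|\le\re T$ can be upgraded to $|T|^2\le(\re T)^2$, then Theorem~\ref{Fong-Istra} yields at once that $T$ is self-adjoint, after which positivity is cheap. First, a normalisation: both the hypothesis $|T|\le\re T$ and the conclusion $T\ge 0$ are unaffected by multiplying $T$ by a positive scalar (and $T=0$ is trivial), so I may assume $\|T\|\le 1$. Then, for free, $\re T\ge|T|\ge|T|^2=T^*T\ge 0$ (here $|T|\ge|T|^2$ because $\||T|\|\le 1$ and $|T|$ commutes with $I-|T|$); in particular $\re T\ge 0$, and moreover $T^*T\le\re T$, which is the same as $(T^*-\tfrac12 I)(T-\tfrac12 I)\le\tfrac14 I$, i.e.\ $\|T-\tfrac12 I\|\le\tfrac12$.

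The heart of the matter is the implication $|T|\le\re T\Rightarrow|T|^2\le(\re T)^2$, and I expect this to be the real obstacle, precisely because $0\le X\le Y$ need not give $X^2\le Y^2$ for general positive operators; the argument must use that $|T|$ and $\re T$ are built from one and the same $T$. I would run the polar decomposition $T=U|T|$, where $U$ is a partial isometry with initial space $\overline{\ran|T|}$, so that $2\re T=T+T^*=U|T|+|T|U^*$. Writing the hypothesis as $2|T|\le U|T|+|T|U^*$, pairing with a vector $x$, and using $\langle|T|x,U^*x\rangle=\langle|T|^{1/2}x,|T|^{1/2}U^*x\rangle$, one Cauchy--Schwarz step gives $\||T|^{1/2}x\|^2\le\||T|^{1/2}x\|\,\||T|^{1/2}U^*x\|$, hence $\||T|^{1/2}x\|\le\||T|^{1/2}U^*x\|$ for every $x$, i.e.\ $|T|\le U|T|U^*$. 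Since $(U|T|U^*)^n=U|T|^nU^*$ for all $n$, the operator-monotone maps $t\mapsto t^{2^{-k}}$ propagate this to $|T|^{2^{-k}}\le U|T|^{2^{-k}}U^*$, and letting $k\to\infty$ (so that $|T|^{2^{-k}}$ tends to the support projection $E$ of $|T|$) yields $E\le UEU^*$, that is $U^*U\le UU^*$. The plan is then to combine these facts with $\re T=\tfrac12(U|T|+|T|U^*)$ and with $T^*T\le\re T$ and feed them into the identity $|T|^2-(\re T)^2=S^2+i[\re T,S]$, where $S=\tfrac{1}{2i}(T-T^*)=S^*$ is the imaginary part of $T$, so as to force the right-hand side to be $\le 0$. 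Making this last welding step rigorous is the part I foresee as genuinely difficult; everything before it is routine.

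Once $|T|^2\le(\re T)^2$ is in hand, Theorem~\ref{Fong-Istra} gives $T=T^*$. Then $\re T=T$, and the hypothesis becomes $|T|\le T$; but a self-adjoint operator always satisfies $|T|\ge T$ (spectrally, $|\lambda|\ge\lambda$), whence $T=|T|$, and this forces the spectral measure of $T$ to be carried by $[0,\infty)$, i.e.\ $T\ge 0$. That completes the argument.
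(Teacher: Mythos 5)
This theorem is quoted in the paper as background from the Fong--Tsui reference and is not proved there, so there is no in-paper argument to compare against; judging your attempt on its own terms, it contains a genuine gap at exactly the decisive point. Your strategy is to reduce to Theorem~\ref{Fong-Istra} by upgrading $|T|\leq \re T$ to $|T|^2\leq (\re T)^2$. The preliminary material is fine: the scaling reduction, the observation $\re T\geq |T|\geq |T|^2\geq 0$ when $\|T\|\leq 1$, and the polar-decomposition computation giving $|T|\leq U|T|U^*$ (and hence, via L\"{o}wner monotonicity of the roots, $U^*U\leq UU^*$) are all correct. But the step you yourself flag as ``the heart of the matter'' --- deducing $|T|^2\leq (\re T)^2$, i.e.\ $S^2+i[\re T,S]\leq 0$ where $S$ is the imaginary part --- is presented only as a ``plan'' whose ``welding step'' you admit you cannot make rigorous. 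Nothing you have established points toward it: from $T^*T\leq \re T$ one cannot pass to $T^*T\leq (\re T)^2$ (that would need $\re T\leq(\re T)^2$, which fails for operators of small norm), and the facts $|T|\leq U|T|U^*$ and $U^*U\leq UU^*$ do not combine in any evident way to control the commutator term.

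Worse, the missing step is not a technical lemma but is essentially equivalent to the whole theorem: under the hypothesis, $|T|^2\leq(\re T)^2$ holds if and only if $S=0$ (one direction is Theorem~\ref{Fong-Istra}, the other is trivial), so establishing that inequality \emph{is} establishing self-adjointness. In other words, the proposal defers the entire content of the result to an unproved claim. The closing portion --- once $T=T^*$, the hypothesis $|T|\leq T$ together with $T\leq |T|$ forces $T=|T|\geq 0$ --- is correct, but it only becomes available after the gap is filled. To complete a proof along these or any lines you need a genuinely new idea that exploits the joint structure of $|T|$ and $\re T$ (Fong and Tsui's own argument does not pass through Theorem~\ref{Fong-Istra} in this way), and as written the proposal does not supply one.
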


\section{Positive Results}
The following conjecture appeared in \cite{Fong-Tsui 1981} (which
was a try to strengthen a result which appeared in
\cite{Fong-Istratescu 1979})

\begin{con}[Fong-Tsui \cite{Fong-Tsui 1981}]
If $T$ is a bounded operator and $|T|\leq |\re T|$, then $T$ is
self-adjoint.
\end{con}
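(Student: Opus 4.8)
The plan is to write $T=A+iB$ with $A=\re T$ and $B=\frac{T-T^*}{2i}$ self-adjoint, so that ``$T$ self-adjoint'' means exactly ``$B=0$''. Two cases are settled at once. If $\re T\geq 0$ then $|\re T|=\re T$, the hypothesis becomes $|T|\leq\re T$, and Theorem~\ref{Fong Tsui} already forces $T$ to be positive, hence self-adjoint; applying this to $-T$ disposes of $\re T\leq 0$ as well. Next, for $x\in\ker A$ one has $\langle|T|x,x\rangle\leq\langle|A|x,x\rangle=0$, so $Tx=0$, and then $T^*x=2Ax-Tx=0$; thus $\ker A$ reduces $T$ and $T$ vanishes on it. Restricting to $\overline{\ran A}$, I may therefore assume that $\re T$ is injective and — by the previous step — that its spectrum meets both $(0,\infty)$ and $(-\infty,0)$. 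This is the configuration that has to be attacked.

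The natural line is to reduce the conjecture to Theorem~\ref{Fong-Istra}: if one can promote $|T|\leq|\re T|$ to $|T|^2\leq(\re T)^2$, then self-adjointness is immediate. This promotion is legitimate as soon as $|T|$ and $|\re T|$ commute, because for commuting positive operators $0\leq S\leq R$ one has $R^2-S^2=R(R-S)+(R-S)S\geq 0$. Now $|\re T|=(A^2)^{1/2}$ is a norm-limit of polynomials in $A^2$, so $|T|$ commutes with $|\re T|$ as soon as $A$ commutes with $|T|^2=T^*T$ — which is precisely the statement that $T$ lies in the $\Theta$-class. Hence the conjecture holds for every $T$ in the $\Theta$-class; in particular it holds for every normal $T$, since then $A$ commutes with $B$ and so with $A^2+B^2=T^*T$. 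I regard this as the cleanest affirmative instance, and it is the first result I would record.

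For the general case one must confront the failure of operator monotonicity of $t\mapsto t^2$ directly. The handle I would use: assume in addition that $A=\re T$ is boundedly invertible (the essential difficulty is already present in this case). Then $|T|\leq|A|$ is equivalent to $\bigl\||A|^{-1/2}T^*T|A|^{-1/2}\bigr\|\leq 1$, i.e. to $C:=T|A|^{-1/2}$ being a contraction, so that $T=C|A|^{1/2}$; writing $A=|A|^{1/2}J|A|^{1/2}$ with $J$ the self-adjoint unitary from the polar decomposition of $A$ (which commutes with $|A|$), the condition $\re T=A$ turns into a rigidity relation among $C$, $C^*$ and $J$, and the aim is to extract $C=J$, whence $T=A$. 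Equivalently, decomposing $H$ along the spectral projections of $A$ and writing $T$ as a $2\times2$ operator matrix, one must show that the off-diagonal blocks vanish. I expect this to be the real obstacle: the constraint $|T|\leq|\re T|$ couples the blocks through a non-operator-monotone expression, $\|C\|\leq 1$ alone does not pin down the off-diagonal part, and the behaviour near the bottom of the spectrum of $A$ requires a separate approximation step. For these reasons I do not expect an unconditional proof along these lines; what can reasonably be claimed is the conjecture under the hypotheses isolated above (the $\Theta$-class, normality, a semidefinite real part) together with a few further sufficient conditions in the same spirit.
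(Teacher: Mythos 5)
The statement you were asked about is the Fong--Tsui conjecture itself, which is open; the paper does not prove it either, and you are right not to claim a proof. The partial results you do establish coincide with the paper's and use the same argument: your reduction to Theorem \ref{Fong-Istra} by first showing $|T|$ commutes with $|\re T|$ (via $\re T$ commuting with $T^*T$, i.e.\ membership in the $\Theta$-class) and then squaring the commuting inequality $0\leq|T|\leq|\re T|$ is exactly the paper's proof of its second theorem, and the normal case is its Corollary \ref{|T| leq |re T| T NORMAL}. What the paper has that you do not: the isometry case (also via the $\Theta$-class), a separate sufficient condition $UT=TU^*$ with $U$ the unitary from the polar decomposition of $\re T$ --- which carries to completion, via Theorem \ref{Fong Tsui} applied to $U^*T$, the polar-decomposition rigidity you only sketch in your last paragraph --- and an unbounded counterexample showing the conjecture fails for non-everywhere-defined operators. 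What you have that the paper does not: the correct observations that a semidefinite real part settles the question immediately via Theorem \ref{Fong Tsui}, and that $\ker(\re T)$ reduces $T$ with $T$ vanishing there, so one may assume $\re T$ injective with spectrum on both sides of $0$. Your assessment that the remaining two-block configuration is the genuine obstruction is sound.
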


The authors in \cite{Fong-Tsui 1981} gave some cases where this
result holds, eg if $T$ is compact or if the Hilbert space is
finite-dimensional (among others).

In the present paper we provide a modest contribution to the
conjecture in order to maintain the hope towards a complete solution
of it. We first give some classes for which the conjecture is true,
then, and in the end, we give an unbounded counterexample that shows
that the conjecture does not hold for unbounded operators.

Here is the first result:

\begin{thm}
Let $T$ be a linear operator on a Hilbert space. Let $U$ be the
unitary operator intervening in the polar decomposition of the
self-adjoint operator $\re T$. If $UT=TU^*$, then $|T|\leq |\re T|$
implies that $T$ is self-adjoint.
\end{thm}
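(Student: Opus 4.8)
The plan is to use the relation $UT=TU^*$ to force $T$ to be block-diagonal with respect to the two subspaces on which $\re T$ is positive, respectively negative, and then to invoke Theorem \ref{Fong Tsui} on each block.

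First I would observe that, since $\re T$ is self-adjoint, the unitary $U$ occurring in its polar decomposition $\re T=U|\re T|$ is necessarily the symmetry $U=\operatorname{sgn}(\re T)$, so that $U=U^*=U^{-1}$ and $U^2=I$ (this forces $\re T$ to be injective, which is already implicit in the hypothesis that $U$ be \emph{unitary}). Then $UT=TU^*$ becomes simply $UT=TU$, and passing to adjoints gives $UT^*=T^*U$ as well. Hence the orthogonal projections $P_\pm:=\tfrac12(I\pm U)$ --- which are exactly the spectral projections of $\re T$ attached to $(0,\infty)$ and $(-\infty,0)$ --- commute with $T$ and with $T^*$, hence with $T^*T=|T|^2$ and so with $|T|$, as well as with $\re T$ and with $|\re T|$. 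Writing $H=H_+\oplus H_-$ with $H_\pm=\ran P_\pm$, this says that each of $H_+,H_-$ reduces every one of the operators $T,\ T^*,\ |T|,\ \re T,\ |\re T|$.

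Next, setting $T_\pm:=T|_{H_\pm}$, the reducing property gives $(T_\pm)^*=T^*|_{H_\pm}$, hence $\re(T_\pm)=(\re T)|_{H_\pm}$ and $|T_\pm|=|T|\,|_{H_\pm}$, while the spectral calculus gives $|\re T|\,|_{H_+}=(\re T)|_{H_+}=\re(T_+)\geq 0$ and $|\re T|\,|_{H_-}=-(\re T)|_{H_-}=\re(-T_-)\geq 0$. Restricting the hypothesis $|T|\leq|\re T|$ to the reducing subspaces $H_+$ and $H_-$ then yields
\[
|T_+|\leq\re(T_+)\ \text{ on }H_+,\qquad\qquad |{-}T_-|=|T_-|\leq\re(-T_-)\ \text{ on }H_-.
\]
Applying Theorem \ref{Fong Tsui} to $T_+$ on $H_+$ and to $-T_-$ on $H_-$, I obtain that $T_+$ is positive and that $-T_-$ is positive; in particular both are self-adjoint, whence $T=T_+\oplus T_-$ is self-adjoint (in fact the orthogonal direct sum of a positive and a negative operator).

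I expect the only step with genuine content to be the very first one --- identifying the unitary part of the polar decomposition of the self-adjoint operator $\re T$ with the symmetry $\operatorname{sgn}(\re T)$, so that $UT=TU^*$ turns into an honest commutation relation and the $P_\pm$ become genuine spectral projections of $\re T$; once that is secured, the reduction to the already-established Theorem \ref{Fong Tsui} is routine. One could alternatively finish through Theorem \ref{Fong-Istra}, by verifying $|T_\pm|^2\leq\re(T_\pm)^2$ on each block, but going via Theorem \ref{Fong Tsui} is shorter and gives the sharper statement.
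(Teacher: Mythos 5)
Your proof is correct, but it routes through a spatial decomposition that the paper avoids. The paper's own argument never identifies $U$ with $\operatorname{sgn}(\re T)$ and never splits the space: it passes directly to the operator $U^*T$, notes that $|U^*T|=|T|$ and that the hypothesis $UT=TU^*$ gives $U^*T=TU$ and hence $U^*\re T=\re(U^*T)$, so that $|T|\leq|\re T|=U^*\re T$ becomes $|U^*T|\leq\re(U^*T)$; Theorem \ref{Fong Tsui} then makes $U^*T$ positive, and unwinding $T^*U=U^*T=TU$ gives $T^*=T$. Your blockwise version is the same underlying idea viewed through the spectral projections $P_\pm$ of $U$: indeed $U^*T=UT=T_+\oplus(-T_-)$, so ``$U^*T\geq 0$'' is exactly ``$T_+\geq 0$ and $-T_-\geq 0$'', and your conclusion that $T$ is the orthogonal sum of a positive and a negative operator is the extra structural information both proofs in fact yield. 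What the paper's formulation buys is that it never needs $U=U^*$: it works for \emph{any} unitary $U$ with $\re T=U|\re T|$ and $UT=TU^*$, whereas your first step (that $U$ is the symmetry $\operatorname{sgn}(\re T)$, hence that the $P_\pm$ are orthogonal projections) is automatic only for the canonical choice of polar factor --- if $\ker(\re T)\neq\{0\}$, a unitary $U$ with $\re T=U|\re T|$ can be modified arbitrarily on the kernel and need not be self-adjoint. With the canonical convention ($U=I$ on $\ker(\re T)$) your argument is complete as written; just attach the kernel to $H_+$, i.e.\ take $P_+$ to be the spectral projection for $[0,\infty)$.
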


\begin{proof}Let $U$ be a unitary operator such that $\re T=U|\re T|$. We may then write
\[|T|\leq |\re T|=U^*\re T.\]
From the hypothesis $UT=TU^*$ and by the unitarity of $U$ (or by the
Fuglede theorem), we get $U^*T=TU$. Moreover, we obtain
\[U^*\re T=\re(U^*T).\]
Observe that
\[|U^*T|=\sqrt{(U^*T)^*(U^*T)}=\sqrt{T^*UU^*T}=\sqrt{T^*T}=|T|.\]
So the hypothesis $|T|\leq |\re T|$ now looks like
\[|U^*T|\leq \re (U^*T).\]
Theorem \ref{Fong Tsui} yields the positiveness of $U^*T$ or simply
its self-adjointness. Calling on again the hypothesis $UT=TU^*$, we
immediately see that
\[T^*U=(U^*T)^*=T^*U=U^*T=TU,\]
which implies that $T$ is self-adjoint thanks to the unitarity of
$U$. This finishes the proof.
\end{proof}

The condition $UT=TU^*$ in the previous theorem can be dropped at
the cost of assuming that $T$ is in the $\Theta$-class. We have:

\begin{thm}
Let $T$ be a bounded operator belonging to the $\Theta$-class. Then
$T$ is self-adjoint whenever $|T|\leq |\re T|$.
\end{thm}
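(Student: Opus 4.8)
The plan is to reduce the statement to Theorem \ref{Fong-Istra} by upgrading the hypothesis $|T|\leq|\re T|$ to the squared inequality $|T|^2\leq(\re T)^2$. Squaring an operator inequality is not legitimate in general, but it is when the two positive operators involved commute, and the $\Theta$-class assumption is exactly what will supply the required commutativity.

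First I would unpack the $\Theta$-class hypothesis: saying that $T+T^*$ commutes with $T^*T$ means precisely that $\re T$ commutes with $|T|^2$. Since $|T|=(|T|^2)^{1/2}$ is a continuous function of $|T|^2$ in the sense of the functional calculus, $\re T$ then commutes with $|T|$ as well. Next, because $\re T$ is self-adjoint and $|T|$ commutes with it, the spectral theorem guarantees that $|T|$ commutes with every bounded Borel function of $\re T$; in particular $|T|$ commutes with $|\re T|$. Hence $|T|$ and $|\re T|$ are two commuting positive operators.

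Now I would invoke the elementary fact that for commuting positive operators $0\leq A\leq B$ one has $A^2\leq B^2$: indeed $B^2-A^2=(B-A)(B+A)$ is a product of two commuting positive operators, hence positive. Applying this with $A=|T|$ and $B=|\re T|$ yields $|T|^2\leq|\re T|^2$. Since $|\re T|^2=(\re T)^2$ trivially (the absolute value of a self-adjoint operator squares back to its square), we arrive at $|T|^2\leq(\re T)^2$, and Theorem \ref{Fong-Istra} then forces $T$ to be self-adjoint, completing the argument.

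The only slightly delicate point is the passage from ``$\re T$ commutes with $|T|^2$'' to ``$|T|$ commutes with $|\re T|$'': it invokes the functional calculus twice (once to descend from $|T|^2$ to $|T|$, once to move from $\re T$ to $|\re T|$), and one must be careful that commutation with a self-adjoint operator propagates to all of its Borel functions. Everything else is routine, and I do not anticipate any genuine obstacle beyond bookkeeping.
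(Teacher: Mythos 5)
Your proof is correct and follows essentially the same route as the paper: extract commutativity of $|T|$ with $|\re T|$ from the $\Theta$-class hypothesis, square the inequality using that commuting positive operators have positive products, and conclude via Theorem \ref{Fong-Istra}. The only cosmetic difference is that you factor $|\re T|^2-|T|^2=(|\re T|-|T|)(|\re T|+|T|)$ in one step, whereas the paper chains the two inequalities $|T|^2\leq|T||\re T|\leq|\re T|^2$; these are the same argument.
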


\begin{proof}
Since $T+T^*$ and $T^*T$ commute, $\re T$ commutes with $|T|$. Hence
the self-adjointness of $\re T$ implies that $|T|$ commutes with
$|\re T|$, i.e.
\[|T||\re T|=|\re T||T|.\]
Since $|T|\leq |\re T|$ (and $|T|\geq 0$!), the previous displayed
equation implies that
\[|T|^2\leq |T||\re T|\text{ and } |T||\re T|\leq |\re T|^2=(\re T)^2\]
or just
\[|T|^2\leq (\re T)^2.\]
The self-adjointness of $T$ now follows from Theorem
\ref{Fong-Istra}, completing the proof.
\end{proof}

It is obvious that normal operators $T$ are in the $\Theta$-class.
Therefore we have the

\begin{cor}\label{|T| leq |re T| T NORMAL}
Let $T$ be a normal operator such that $|T|\leq |\re T|$. Then $T$
is self-adjoint.
\end{cor}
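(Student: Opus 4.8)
The plan is to obtain this as an immediate consequence of the theorem just proved, so the only real task is to check that every normal operator belongs to the $\Theta$-class. First I would unwind the definition: $T$ normal means $T^*T=TT^*$. From this I would verify that $T$ commutes with $T^*T$ via the short computation $T(T^*T)=TT^*T=(TT^*)T=(T^*T)T$, where the middle equality is exactly normality. Since $T^*T$ is positive, hence self-adjoint, taking adjoints in the identity $T(T^*T)=(T^*T)T$ shows that $T^*$ commutes with $T^*T$ as well (alternatively one could invoke Fuglede's theorem here). Adding the two commutation relations gives $(T+T^*)(T^*T)=(T^*T)(T+T^*)$, i.e.\ $\re T$ commutes with $T^*T$, which is precisely the assertion that $T$ lies in the $\Theta$-class.

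Having placed $T$ in the $\Theta$-class, I would simply quote the previous theorem: under the hypothesis $|T|\leq|\re T|$ the operator $T$ is then self-adjoint, and the proof is finished.

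There is essentially no obstacle in this corollary. Its entire content is the elementary remark that normality propagates from $T$ to $T^*$ through the self-adjoint operator $T^*T$, so that $T+T^*$ commutes with $T^*T$; everything else is inherited verbatim from the theorem on $\Theta$-class operators.
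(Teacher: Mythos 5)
Your proof is correct and follows exactly the route the paper intends: the paper simply remarks that normal operators obviously belong to the $\Theta$-class and invokes the preceding theorem, while you supply the short computation $T(T^*T)=(TT^*)T=(T^*T)T$ and its adjoint that makes this explicit. Nothing further is needed.
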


Similarly, isometries do lie in the $\Theta$-class and so we have
the

\begin{cor}\label{|T| leq |re T| T ISOMETRY}
Let $T$ be an isometry operator verifying $|T|\leq |\re T|$. Then
$T$ is self-adjoint.
\end{cor}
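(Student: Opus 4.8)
The plan is to reduce Corollary \ref{|T| leq |re T| T ISOMETRY} to the already-proven Theorem on operators in the $\Theta$-class, so the only real content is verifying that an isometry lies in the $\Theta$-class. First I would record that if $T$ is an isometry, then by definition $T^*T = I$, the identity operator. Consequently $T^*T$ commutes with \emph{everything}, and in particular with $T + T^* = 2\re T$. By the definition of the $\Theta$-class recalled in Section 1 (namely, $T + T^*$ commutes with $T^*T$), this places $T$ in the $\Theta$-class without any further hypothesis.

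Having established $T \in \Theta$, the second step is simply to invoke the preceding theorem: any bounded operator in the $\Theta$-class satisfying $|T| \leq |\re T|$ is self-adjoint. Since an isometry is automatically bounded (indeed $\|Tx\| = \|x\|$ forces $\|T\| = 1$), the hypotheses of that theorem are met as soon as we assume $|T| \leq |\re T|$, and the conclusion that $T$ is self-adjoint follows immediately.

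There is essentially no obstacle here; the statement is a one-line corollary once the $\Theta$-class membership is observed, exactly parallel to the normal case in Corollary \ref{|T| leq |re T| T NORMAL}. The only thing worth a sentence of commentary is that the inclusion is, if anything, even more transparent than for normal operators: for a normal $T$ one needs the (elementary) fact that $T$ commutes with $T^*$, whereas for an isometry $T^*T$ is literally a scalar multiple of the identity, so commutativity with $T + T^*$ is trivial. One could also remark, for the reader's benefit, that a self-adjoint isometry is precisely a (difference of orthogonal projections onto complementary subspaces, i.e. a) symmetry $T = T^* = T^{-1}$, which situates the conclusion concretely, though this is not needed for the proof.

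\begin{proof}
If $T$ is an isometry, then $T^*T=I$, and hence $T^*T$ commutes with $T+T^*$; that is, $T$ belongs to the $\Theta$-class. Since an isometry is bounded, the previous theorem applies and yields that $T$ is self-adjoint whenever $|T|\leq |\re T|$.
\end{proof}
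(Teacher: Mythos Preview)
Your argument is correct and is exactly the approach the paper takes: the paper simply observes that isometries lie in the $\Theta$-class (since $T^*T=I$ commutes with everything) and then invokes the preceding theorem, just as you do.
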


\section{The Unbounded Analogue}

In this section we present an \textit{unbounded} counterexample to
the conjecture. This was probably expected due to the hazardous
terrain of domains of unbounded operators. Here is the
counterexample:

Let $S$ be an unbounded operator $S$, defined in a Hilbert space
$\mathcal{H}$ say, with domain $D(S)\subsetneq \mathcal{H}$. Set
$T=S-S$. Then $T=0$ \textit{on} $D(T)=D(S)\subsetneq \mathcal{H}$.

It is clear that $T$ \textit{cannot be self-adjoint} as it is not
closed (it is, however, symmetric) and that $D(T^*)=\mathcal{H}$.
Nonetheless, we have the following
\[T^*T(f)=0 \text{ on } D(T^*T)=\{f\in D(T):~0\in D(T^*)\}=D(T)\]
and
\[\left(\frac{T+T^*}{2}\right)(f)=0 \text{ on } D(T)\cap D(T^*)=D(T) \text{ as } D(T^*)=\mathcal{H}\]

which lead to \textit{formally} write $|T|=|\re T|$.


\begin{thebibliography}{1}

\bibitem{Campbell 1975 T+t* comm T*T}
S. L. Campbell, Linear operators for which $T^*T$ and $T+T^*$
commute,  \textit{Pacific J. Math.} {\bf 61/1} (1975) 53-57.

\bibitem{Con}
J. B. Conway, \textit{A Course in functional analysis}, \textnormal{
Springer, 1990 (2nd edition)}.

\bibitem{Fong-Istratescu 1979}
C. K. Fong, V. I. Istr\u{a}\c{t}escu, Some characterizations of
Hermitian operators and related classes of operators, \textit{Proc.
Amer. Math. Soc.}, {\bf 76}, (1979) 107-112.

\bibitem{Fong-Tsui 1981}
C. K. Fong, S. K. Tsui, A note on positive operators, \textit{J.
Operator Theory}, {\bf 5/1}, (1981) 73-76.

\bibitem{RUD}
W.~Rudin, Functional analysis,\textit{ McGraw-Hill}, 1991 (2nd
edition).

\end{thebibliography}
\end{document}